\documentclass[a4paper, 12pt]{amsart}

\usepackage[utf8]{inputenc}

\usepackage[T2A]{fontenc}
\usepackage{amsmath,amssymb,amsthm, mathtools}
\usepackage[a4paper,hmargin=2.5cm,vmargin=2.5cm]{geometry}
\usepackage[english]{babel}

\usepackage{hyperref}

\usepackage{amsthm}
\usepackage{amscd}
\usepackage{amsmath}
\usepackage{amsfonts}
\usepackage{amssymb}
\usepackage{graphicx}
\usepackage{amsopn}
\usepackage[usenames]{color}
\usepackage{tikz}
\usepackage{tikz-cd}
\usepackage{textcomp}
\usepackage{marginnote}
\usetikzlibrary{arrows,shapes,snakes,automata,backgrounds,petri}

\theoremstyle{theorem}
\newtheorem{theorem}{Theorem}

\newtheorem{lemma}[theorem]{Lemma}

\theoremstyle{definition}
\newtheorem{definition}[theorem]{Definition}

\newtheorem{remark}[theorem]{Remark}

\theoremstyle{definition}
\newtheorem*{notation}{Notation}

\DeclareMathOperator{\sym}{Sym}

\DeclareMathOperator{\charac}{char}

\def\C{\mathbb{C}}

\def\a{\alpha}

\numberwithin{theorem}{section}
\allowdisplaybreaks[3]

\begin{document}
%%
%% Title page
%%
\date{}
\author{Konstantin M. Posadskiy}
\title{On a family of non-coset 2-valued groups}
\address{
    HSE University
}
\email{kmposadskiy@hse.ru}
\maketitle{}
%%
%% ===========================================================================
%%

\begin{abstract}
Within an important class of discrete-time 2-valued dynamical systems on $\mathbb{C}$ defined by polynomials, we extract a family of systems induced by the action of a 2-valued group. We study this 2-valued group and its analogues obtained by replacing $\mathbb{C}$ with an arbitrary field $F$. We establish that all of them are double coset groups. For some of them we prove non-cosetness. In the case $F = \mathbb{C}$, this yields the first example of a topologically non-coset, non-discrete topological 2-valued group.
\end{abstract}

\section{Introduction}

\subsection{Results}\
An interesting question in the theory of multivalued dynamical systems (dynamics) concerns the possibility of realizing a multivalued dynamics via the action of a multivalued group (in earlier papers, the term ``a dynamics is integrable by means of a multivalued group'' was used instead of ``a dynamics is induced by the action of a multivalued group.''). A partial answer to this question for discrete multivalued dynamics was obtained in \cite{G-YA}. One class of examples of continuous multivalued dynamics consists of dynamics defined by polynomials. A polynomial $P(z,w)$ in two variables with leading coefficient of $w^n$ equal to $1$ defines a continuous $n$-valued dynamics on $\C$ that sends $z$ to the $n$-tuple of roots of $P(z,w)$. As follows from \cite{KP}, many 2-valued dynamics of this type cannot be realized via the action of a multivalued group.

One of the main results of the present paper is finding a family of continuous 2-valued dynamics on $\mathbb{C}$ of this type that can be induced by the action of a multivalued group. Specifically, consider the family of 2-valued dynamics on $\mathbb{C}$ defined by polynomials of the form
$$P(z, w) = w^2 + 2(a_1z + a_0)w + (-a_1z + a_0)^2,$$
where $a_1 \not= 0$. This is a family of $2$-valued dynamics $z \mapsto (a\sqrt{z} \pm b)^2$ where $a, b \in \C$, $a \not= 0$. To present a 2-valued group that induces the dynamics described above, we give a general construction of a family of 2-valued groups, which is of independent interest.

Let $R$ be a commutative ring with identity in which $2 \not= 0$. Denote the set of invertible elements of this ring by $R^\times$. They form a group under multiplication. Let $M$ be some $R$-module, let $H$ be some subgroup of $R^\times$ containing $-1$. Denote the involution $f \mapsto -f$ on $R, M, H$ by $\iota$.

\begin{notation}
    Throughout the paper, square brackets are used in two distinct standard meanings, distinguished by the number of elements inside. The notation $[a]$ with a single element denotes the equivalence class of $a$. The notation $[x_1, \dots, x_k]$ with $k \geqslant 2$ denotes the multiset consisting of the listed elements.
\end{notation}

Consider the set $\mathcal{X}_{R, M, H} = (M/\iota) \times (H/\iota)$.

\begin{theorem}
    \label{really_2val_gr}
    The set $\mathcal{X}_{R, M, H}$ with the $2$-valued operation 
    $$([a], [b]) * ([c], [d]) = \bigl[([a+bc], [bd]), ([a-bc], [bd])\bigr]$$
    is a $2$-valued group.
\end{theorem}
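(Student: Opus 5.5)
We need to check three things: that the operation is well defined, that it is associative as an operation on multisets, and that there is a unit and an inverse in the sense of Buchstaber's definition of an $n$-valued group. The axioms are:
\begin{itemize}
\item associativity: the $4$-element multisets $(x*y)*z$ and $x*(y*z)$ coincide;
\item a unit $e$ with $e*x=x*e=[x,x]$;
\item an inverse map $\mathrm{inv}$ with $x\in \mathrm{inv}(x)*x$ and $x\in x*\mathrm{inv}(x)$ for $x=e$, i.e.\ $e\in x*\mathrm{inv}(x)$ and $e\in \mathrm{inv}(x)*x$.
\end{itemize}

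The cleanest route is to realize the operation as a coset construction and avoid most of the computation. Consider the group $G=M\rtimes H$ of affine maps $m\mapsto a+bm$, with product $(a,b)(c,d)=(a+bc,bd)$. Let $\iota$ act on $G$ by $(a,b)\mapsto(-a,-b)$; this is an automorphism of order $2$, since $(-a)+(-b)(-c)=-(a+bc)$ and $(-b)(-d)=bd$ differ from the product by the correct signs. Also multiplication by the central element $(0,-1)$ on the left sends $(a,b)$ to $(-a,-b)$, so $\iota$ is in fact inner. Then $\mathcal{X}=M/\iota\times H/\iota$ is not literally $G/\iota$, since we quotient $a$ and $b$ independently. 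Hence I would instead verify directly, using the fact that sign choices factor.

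\textbf{Well-definedness.} Replacing $a\to -a$ gives the multiset $[[-a-bc],[-a+bc]]$, which is the same multiset. Replacing $b\to -b$ swaps the two entries and leaves $[bd]$ unchanged up to sign. Replacing $c\to -c$ also swaps the entries, and replacing $d\to -d$ leaves everything unchanged. So the result is independent of representatives.

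\textbf{Associativity.} Both $(x*y)*z$ and $x*(y*z)$ for $x=([a],[b])$, $y=([c],[d])$, $z=([e],[f])$ should equal the multiset
$$\bigl[([a+\epsilon_1 bc+\epsilon_2 bde],[bdf])\bigr]_{\epsilon_1,\epsilon_2=\pm1}.$$
For the left-hand side this is immediate: $([a\pm bc],[bd])*([e],[f])$ gives $[a\pm bc\pm' bde]$. For the right-hand side, $y*z=[([c\pm de],[df])]$, and then $x*(c\pm de,df)$ gives $[a\pm' b(c\pm de)]$. The four values $a\pm'bc\pm'\pm bde$ run over the same four sign patterns, since $(\epsilon',\epsilon)\mapsto(\epsilon',\epsilon'\epsilon)$ is a bijection. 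This bijection step is the only point needing care: one must match multiplicities as multisets, not merely as sets.

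\textbf{Unit and inverse.} Take $e=([0],[1])$. Then $e*([c],[d])=[([\pm c],[d])]=[x,x]$, and $x*e=[([a],[b]),([a],[b])]$. For the inverse, set $\mathrm{inv}([a],[b])=([b^{-1}a],[b^{-1}])$, which is well defined since sign changes commute with these maps. Then $x*\mathrm{inv}(x)\ni([a-bb^{-1}a],[1])=e$, and $\mathrm{inv}(x)*x\ni([b^{-1}a-b^{-1}a],[1])=e$. If the paper's definition of $n$-valued group also requires the stronger inverse condition $\mathrm{inv}(x*y)=\mathrm{inv}(y)*\mathrm{inv}(x)$, or involutivity of $\mathrm{inv}$, I would check these similarly by a direct sign-pattern computation.

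I expect no real obstacle beyond bookkeeping of signs. The hypothesis $2\neq0$ is not needed for the axioms themselves; it is presumably there so that $\iota$ acts nontrivially on $M$, $R$, and $H$.
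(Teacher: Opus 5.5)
Your direct verification is correct and is essentially the paper's proof. It uses the same unit $([0],[1])$ and the same inverse $([b^{-1}a],[b^{-1}])$, and proves associativity by expanding signs; you additionally check well-definedness on representatives and match multiplicities via the bijection $(\epsilon',\epsilon)\mapsto(\epsilon',\epsilon'\epsilon)$, both of which the paper leaves implicit.

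You should delete the opening coset digression, though, because its claims are false. The map $(a,b)\mapsto(-a,-b)$ sends the identity $(0,1)$ to $(0,-1)$, so it is not an automorphism of $M\rtimes H$; indeed $(-a,-b)(-c,-d)=(-a+bc,bd)\neq(-a-bc,-bd)$. Also $(0,-1)$ is not central, since $(a,b)(0,-1)=(a,-b)$. The structural shortcut you were looking for is the double coset group $\langle(0,-1)\rangle\backslash(M\rtimes H)/\langle(0,-1)\rangle$, exactly as in Section~\ref{section:bicoset}.
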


As a special case of this construction, we can take $(F / \iota) \times (F^\times / \iota)$, where $F$ is an arbitrary field. We denote this $2$-valued group $\mathcal{X}_{F, F, F^\times}$ by $\mathcal{X}_F$.

\begin{definition} \label{definition:action}
    An \textit{action} of an $m$-valued group $A$ with unit $e$ and multiplication operation $\mu$ on a set $S$ is a mapping $\nu: A \times S \to \sym^m(S)$ such that
    
    1) The $m^2$-element multisets $\nu(a_1, \nu(a_2, s))$ and $\nu(\mu(a_1, a_2), s)$ coincide for all $a_1, a_2 \in A$, $s \in S$
    
    2) $\nu(e, s) = [s, \dots, s]$ for each $s \in S$.

    We say that an $m$-valued dynamics $T \in \mathcal{T}_m(S)$ \textit{is induced by the action of a 2-valued group $A$ with an element $a$} (in earlier papers ``integrable via $A$'') if there exists an action $\nu$ of $A$ on $S$ such that the multisets $T(s)$ and $\nu(a, s)$ coincide for each $s \in S$.
\end{definition}

In section \ref{section:2-val} we prove the following theorem about the $2$-valued group $\mathcal{X}_\C$.

\begin{theorem} \label{theorem:integrable_by_X}
    The 2-valued dynamics $z \mapsto (a\sqrt{z}\pm b)^2$ on $\C$ is induced by a continuous action of the $2$-valued group $\mathcal{X}_\C$ with the element $([a], [b])$.
\end{theorem}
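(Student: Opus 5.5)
The plan is to write down an explicit action and check the two axioms of Definition \ref{definition:action}. For $([a],[b]) \in \mathcal{X}_\C$ and $z \in \C$, choose a square root $w$ of $z$ and set
$$\nu\bigl(([a],[b]), z\bigr) = \bigl[(b w + a)^2, (b w - a)^2\bigr].$$
Here the first coordinate of the group element is the translation part and the second is the multiplicative part, so that the element $([b],[a])$ in this notation (translation $b$, scaling $a$) acts by $z \mapsto (a\sqrt z \pm b)^2$. I will fix the labelling so that the element named $([a],[b])$ in the statement corresponds to scaling by $a$ and translation by $b$. The unordered pair does not depend on the choices: replacing $w$ by $-w$, $a$ by $-a$, or $b$ by $-b$ only permutes the two values $(bw\pm a)^2$, since $(-x)^2=x^2$. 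Hence $\nu$ is a well-defined map $\mathcal{X}_\C \times \C \to \sym^2(\C)$.

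Next I verify the unit axiom. The unit of $\mathcal{X}_\C$ is $([0],[1])$, since $([0],[1])*([c],[d]) = [([c],[d]),([c],[d])]$. It gives $\nu(e,z) = [w^2,w^2] = [z,z]$.

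For the composition axiom, take $g_1=([a_1],[b_1])$ and $g_2=([a_2],[b_2])$. Then $\nu(g_2,z)$ consists of $z' = (b_2w \pm a_2)^2$, and $b_2 w\pm a_2$ is a square root of $z'$. Applying $g_1$ gives the four values
$$\bigl(b_1(b_2 w \pm a_2) \pm' a_1\bigr)^2 = \bigl(b_1b_2 w \pm b_1a_2 \pm' a_1\bigr)^2.$$
On the other side, Theorem \ref{really_2val_gr} gives $g_1*g_2 = [([a_1+b_1a_2],[b_1b_2]),([a_1-b_1a_2],[b_1b_2])]$. Acting by these two elements gives $(b_1b_2 w \pm (a_1 \pm' b_1 a_2))^2$. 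As multisets of four elements, both sides are the squares of the four numbers $b_1b_2 w + \epsilon_1 a_1 + \epsilon_2 b_1a_2$ with $\epsilon_i=\pm1$, up to overall sign. Matching these carefully with multiplicities is the only bookkeeping point. One multiset comes from the pairs $\pm(b_1 a_2)$ and $\pm' a_1$, the other from $\pm(a_1\pm' b_1a_2)$. Both range over all four sign patterns once, so the multisets coincide.

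Finally, continuity. I would show that $\nu$ is continuous as a map $\mathcal{X}_\C\times\C\to\sym^2(\C)$, using the quotient topologies on $\C/\iota$, $\C^\times/\iota$ and $\sym^2(\C)$. The cleanest route is to note that the multiset $\nu$ is the root set of a monic quadratic in $u$ whose coefficients are polynomials in $z, a^2, b^2$. Indeed, $(bw+a)^2+(bw-a)^2 = 2(b^2z+a^2)$ and $(bw+a)^2(bw-a)^2 = (b^2z-a^2)^2$. So
$$\nu\bigl(([a],[b]),z\bigr) = \text{roots of } u^2 - 2(b^2 z + a^2)u + (b^2z-a^2)^2.$$
The coefficients are continuous functions on $\mathcal{X}_\C\times\C$, because $a^2$ and $b^2$ are continuous on the quotients. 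The map from monic quadratics to $\sym^2(\C)$ is a homeomorphism. This gives continuity.

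This polynomial form also checks the dynamics directly. For the element with scaling $a$ and translation $b$ it yields $u^2-2(a^2z+b^2)u+(a^2z-b^2)^2$, the roots of which are exactly $(a\sqrt z\pm b)^2$, so $T(z)=\nu(\cdot,z)$.

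The main obstacle is the consistency of conventions, namely which coordinate of $([a],[b])$ plays the role of scaling. I will align this with the multiplication rule of Theorem \ref{really_2val_gr}, in which the second coordinate multiplies. The multiset identity itself is routine.
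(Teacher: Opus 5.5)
Your proposal is correct and follows the same route as the paper: you write the explicit action $\nu(g,z)=(s\sqrt{z}\pm t)^2$, where $s$ is the multiplicative and $t$ the translational part of $g$, and then check the unit and composition axioms directly. It is in fact more careful than the paper's proof. The paper silently swaps the two coordinates (it writes the unit as $([1],[0])$ and composes in a convention different from Theorem~\ref{really_2val_gr}), and it omits both the independence of $\nu$ from the choice of $\sqrt{z}$ and of representatives and the continuity of $\nu$, which you supply via the quadratic $u^2-2(b^2z+a^2)u+(b^2z-a^2)^2$.
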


\begin{remark}
    Of particular interest is the fact that all dynamics in this family are induced by the action of the same (non singly generated) 2-valued group.
\end{remark}

The question of whether a $2$-valued group $\mathcal{X}_{R, M, H}$ is a coset group and whether it is a double coset group is of independent interest.
\begin{definition}
    Let $G$ be a group, and let $\psi: H \to \operatorname{Aut}(G)$ be a homomorphism from an group $H$ of order $n$ to the automorphism group of $G$. The set of orbits $G/\psi(H)$ of elements of $G$ under the action of $H$, equipped with the operation
$$[g_1] * [g_2] = [g_1 \psi(h)(g_2)]_{h \in H},$$
is called the \textit{coset group} $G/\psi(H)$.
\end{definition}

\begin{definition}
    Let $G$ be a group, and let $H$ be its subgroup of order $n$. The \textit{double coset group} $H \backslash G / H$ is the set of classes $\{ HgH \mid g \in G \}$ equipped with the $n$-valued operation
    $$Hg_1H * Hg_2H = [Hg_1hg_2H]_{h \in H}$$
\end{definition}

Most known multivalued groups arise from the coset and double coset constructions; however, not all multivalued groups are coset groups. Thus, an important problem in the theory of multivalued groups is to determine whether a given multivalued group is isomorphic to some coset or to some double coset multivalued group. The remarkable survey paper \cite{mval} on the theory of multivalued groups, as well as \cite{BVEP}, \cite{Pon}, \cite{GPV}, present and study an example of a family of three-element multivalued groups that are not coset groups. Further results on the coset problem have been obtained in \cite{Lindef}, \cite{sigma}, \cite{BVG}, where the authors found new families of examples of non-coset commutative multivalued groups.

However, no examples of non-coset topological multivalued groups were known. The following theorem, proved in section \ref{section:non-coset}, addresses this question.

\begin{theorem} \label{theorem:non_topol_coset}
    The 2-valued group $\mathcal{X}_\C$ cannot be obtained from a topological group via the coset construction.
\end{theorem}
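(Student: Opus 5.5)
The plan is to argue by contradiction: assume there is a topological group $G$ with a continuous involutive automorphism $\sigma$ such that the coset group $G/\langle\sigma\rangle$ is isomorphic to $\mathcal{X}_\C$, both as a $2$-valued group and topologically, via the quotient map $\pi\colon G\to\mathcal{X}_\C$. I will first use only the algebra of the operation to locate two distinguished pieces of $G$. Then I will derive a contradiction from the connectedness of $\C^\times$.

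\emph{Step 1 (the fixed subgroup).} In a coset group we have $[g]*[h]=\bigl[[gh],[g\sigma(h)]\bigr]$. Left multiplication by $[g]$ gives a doubled multiset $[z,z]$ for every $[h]$ if and only if $g\sigma(h)\in\{gh,\sigma(g)\sigma(h)\}$ for all $h$. Since not every $h$ is $\sigma$-fixed, this forces $\sigma(g)=g$. Conversely, every $\sigma$-fixed $g$ has this property.

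In $\mathcal{X}_\C$ the elements with this property are exactly $K=\{([0],[b])\}$. Indeed, $([0],[b])*([c],[d])=\bigl[([bc],[bd]),([bc],[bd])\bigr]$. On the other hand, if $a\neq0$, then $([a],[b])*([c],[1])$ contains the two classes $[a\pm bc]$, which are distinct for a suitable $c$.

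It follows that $\pi$ maps $\operatorname{Fix}\sigma$ bijectively onto $K$. Hence $\operatorname{Fix}\sigma$ is a group isomorphic to $\C^\times/\{\pm1\}$, and we write $g_b$ for the fixed preimage of $([0],[b])$.

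\emph{Step 2 (the subgroup over $\C/\iota$).} Let $P=\{([a],[1])\}$. It is closed under $*$ and is the coset group $\C/\{\pm1\}$. Therefore $N=\pi^{-1}(P)$ is a closed $\sigma$-invariant subgroup of $G$. It satisfies $N\cap\operatorname{Fix}\sigma=\{e\}$, since $([0],[1])$ is the only element of $P\cap K$.

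Since $N/\sigma\cong\C/\pm$ is homeomorphic to $\C$ via $[z]\mapsto z^2$, the restriction $N\setminus\{e\}\to\C^\times$ is a two-sheeted covering. The local structure at $e$ is the standard cone picture. From this I expect to conclude that $N$ is homeomorphic to $\C$ with $\sigma$ acting as $z\mapsto -z$.

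A connected $2$-dimensional locally Euclidean group is a Lie group, and one with trivial topology is $\mathbb{R}^2$. Using $\sigma$ and commutativity of $P$, I will upgrade this to an isomorphism of topological groups $N\cong\mathbb{R}^2$ under which $\sigma=-\mathrm{id}$. The induced homeomorphism $\C/\pm\to\mathbb{R}^2/\pm$ then comes from a real-linear bijection $T\colon\C\to\mathbb{R}^2$.

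I expect this identification to be the main obstacle. It needs care with the non-Hausdorff/covering subtleties and with using the 2-valued isomorphism to pin down linearity.

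\emph{Step 3 (the contradiction).} In $\mathcal{X}_\C$ we have $([0],[b])*([c],[1])*([0],[b^{-1}])=([bc],[1])$, with all products doubled. So conjugation by $g_b$ preserves $N$ and acts by a continuous automorphism $\rho(b)\in GL_2(\mathbb{R})$ that commutes with $-\mathrm{id}$. On $N/\sigma$ it covers $[c]\mapsto[bc]$, so $\rho(b)=\varepsilon(b)\,T m_b T^{-1}$ with $\varepsilon(b)\in\{\pm1\}$, where $m_b$ is multiplication by $b$.

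The map $b\mapsto\rho(b)$ is continuous on $\C^\times$, since $b\mapsto g_b$ is continuous by Step 1 and the homeomorphism. Hence $\varepsilon$ is continuous, and therefore constant on the connected set $\C^\times$.

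However, $g_b=g_{-b}$, so $\rho(-b)=\rho(b)$, which gives $\varepsilon(-b)(-b)=\varepsilon(b)\,b$, i.e.\ $\varepsilon(-b)=-\varepsilon(b)$. This contradicts the constancy of $\varepsilon$ and proves Theorem~\ref{theorem:non_topol_coset}.
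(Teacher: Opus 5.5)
Your skeleton matches the paper's. Step~1 carries the content of the paper's lemma that $G_Z$ is pointwise $\varphi$-fixed with unique preimages. Step~3 is exactly its final argument: in the paper's coordinates, the first component of $I(t)\cdot(1,[1])$ is $\alpha_X(g_b x_1 g_b^{-1})$ with $b=e^{i\pi t}$, i.e.\ your $\rho(b)$ applied to a preimage of $([1],[1])$.

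The gap is Step~2, which you only announce. The fact that carries the weight there, and which the paper proves in statement~(2) of its key lemma, is that the two-sheeted covering $N\setminus\{e\}\to\C^\times$ is \emph{connected}. The cone picture at $e$ does hold, because $\pi_G$ is a closed map and $\pi^{-1}([0],[1])=\{e\}$. But it is equally compatible with the trivial covering, where $N$ would be two planes glued at $e$. In that case there is no $\sigma$-equivariant identification $N\cong\C$, and your sign $\varepsilon(b)$ is not even defined. You must exclude this using the group structure. One way is homogeneity: for a trivial covering every connected neighbourhood $U$ of $e$ has $U\setminus\{e\}$ disconnected, which fails at points $p\neq e$, yet left translation by $p$ is a homeomorphism taking $e$ to $p$. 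Another is the paper's argument: $[2]_1\cdot[e^{i\pi t}]_j$ would have to project continuously to $[2+e^{i\pi t}]$ along a closed loop, forcing $[3]=[1]$.

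The route you propose instead fails as written, and it is not needed. A simply connected $2$-dimensional Lie group need not be $\mathbb{R}^2$: the group of maps $y\mapsto ay+b$ with $a>0$ is homeomorphic to $\mathbb{R}^2$ and non-abelian. Commutativity of the $2$-valued group $P$ does not by itself make $N$ abelian, and the passage to a real-linear $T$ is never carried out.

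Linearity is irrelevant anyway. Once the covering is connected, it is isomorphic to $c\mapsto[c]$ on $\C^\times$. This gives a $\sigma$-equivariant homeomorphism $h\colon N\to\C$ lifting $\pi|_N$, which is the paper's $\alpha_X$. Then $h\rho(b)h^{-1}$ is continuous with $h\rho(b)h^{-1}(c)\in\{\pm bc\}$. On the connected set $\C^\times$ the two loci are disjoint and closed, so $h\rho(b)h^{-1}(c)=\varepsilon(b)\,bc$ for a single sign $\varepsilon(b)$. Continuity of $\varepsilon$ needs continuity of $b\mapsto g_b$. This holds because $\operatorname{Fix}\sigma=\pi^{-1}(K)$ is closed and $\pi_G$ is closed, so $\pi|_{\operatorname{Fix}\sigma}$ is a homeomorphism onto $K$. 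With these repairs your Step~3 goes through unchanged.
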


We also prove two following theorems for a $2$-valued group $\mathcal{X}_F$.

\begin{theorem} \label{theorem:non_coset_4k+1}
    Let $F$ be an arbitrary field of characteristic $4k+1$. Then the $2$-valued group $\mathcal{X}_F = (F / \iota) \times (F^\times/ \iota)$ is not coset
\end{theorem}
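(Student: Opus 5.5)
The plan is to argue by contradiction. Since $F$ has characteristic $p=4k+1$, the prime field $\mathbb{F}_p\subset F$ contains an element $i$ with $i^2=-1$, and the whole argument is built around the element $x=([0],[i])\in\mathcal{X}_F$. Suppose $\mathcal{X}_F\cong G/\psi(H)$ with $|H|=2$, and let $\varphi=\psi(h)$ for the generator $h$ of $H$, so $\varphi^2=\mathrm{id}$ and $[g_1]*[g_2]=[g_1g_2,\,g_1\varphi(g_2)]$. I will first extract rigid algebraic data in $G$ from products computed in $\mathcal{X}_F$, and then show that this data contradicts $\varphi^2=\mathrm{id}$ together with $i^2=-1$.

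\emph{Step 1 (lift of $x$).} In $\mathcal{X}_F$ we have $x*x=[([0],[-1]),([0],[-1])]=[e,e]$. The orbit of $1\in G$ is $\{1\}$, so for a lift $g$ of $x$ this gives $g^2=1$ and $g\varphi(g)=1$. Hence $\varphi(g)=g$ and $g^2=1$, and conjugation $c(u)=gug$ is an involutive automorphism of $G$ commuting with $\varphi$.

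\emph{Step 2 (translation subgroup).} Let $y_a=([a],[1])$. These elements form a sub-$2$-valued group isomorphic to $F/\iota$, and I choose lifts $h_a\in G$, so that the orbit of $h_a$ is $\{h_a,\varphi(h_a)\}$. In $\mathcal{X}_F$ one computes $x*y_a=[([ia],[i]),([ia],[i])]$ and $([ia],[i])*x=[y_{ia},y_{ia}]$. Since $g$ is $\varphi$-fixed, this gives $gh_ag\in\{h_{ia},\varphi(h_{ia})\}$. So $c$ maps the orbit of $h_a$ onto the orbit of $h_{ia}$. Writing $c(h_a)=\varphi^{\varepsilon(a)}(h_{ia})$ with $\varepsilon(a)\in\{0,1\}$, I will use the relations $h_ah_b\in\{h_{a+b},h_{a-b}\}$ (up to $\varphi$) coming from $y_a*y_b$ to normalize the lifts. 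The aim is to choose the $h_a$ coherently along $\mathbb{F}_p$-lines, for instance $h_{na}=h_a^n$, so that $\varepsilon$ becomes constant on such lines and is compatible with the additive relations.

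\emph{Step 3 (contradiction).} With the lifts normalized, applying $c$ twice sends $h_a\mapsto\varphi^{\varepsilon}(h_{ia})\mapsto\varphi^{2\varepsilon}(h_{i^2a})=h_{-a}$. I expect the normalization to force $h_{-a}=\varphi(h_a)$ (equivalently $h_{-a}=h_a^{-1}$ where this makes sense). Then $c^2=\mathrm{id}$ forces $\varphi(h_a)=h_a$ for all $a$. This would make every orbit over the translations a singleton, so $y_a*y_b$ would equal $[y_{a+b},y_{a+b}]$. That contradicts $y_1*y_1=[y_2,e]$ with $y_2\neq e$, which holds because $2\neq 0$.

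\emph{Main obstacle.} I expect the hardest part to be Step 2, namely controlling the sign function $\varepsilon$ and proving that $\varphi(h_a)$ really equals the lift of $-a$ rather than $h_a$ itself. The fibres of $G\to G/\varphi$ may have size one or two, and the lifts are only defined up to $\varphi$, so the relation $c^2(h_a)=\varphi(h_a)$ must be forced from the multiplication table rather than assumed. My first attempt will restrict to the $\mathbb{F}_p$-span of $1$, which is stable under $a\mapsto ia$. There I will use that $h_1$ generates a cyclic subgroup whose image exhausts $\{y_n\}$, and check directly that $\varphi(h_1)=h_1^{-1}\neq h_1$ and $c(h_1)=h_1^{m}$ with $m\equiv\pm i \pmod p$. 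Then $c^2(h_1)=h_1^{m^2}=h_1^{-1}\neq h_1$ contradicts $g^2=1$.
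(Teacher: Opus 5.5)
\textbf{There is a genuine gap.} Your Step 1 and the first half of Step 2 are correct: the lift $g$ of $([0],[i])$ is a $\varphi$-fixed involution, and conjugation $c$ by $g$ carries $\pi^{-1}(y_a)$ onto $\pi^{-1}(y_{ia})$. Everything after that is asserted rather than proved, and that is where all the content lies.

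In Step 3 the chain $h_a\mapsto\varphi^{\varepsilon}(h_{ia})\mapsto\varphi^{2\varepsilon}(h_{-a})$ silently assumes $\varepsilon(ia)=\varepsilon(a)$. With the natural convention $h_{-a}=\varphi(h_a)$ one gets $c^2(h_a)=\varphi^{\varepsilon(a)+\varepsilon(ia)+1}(h_a)$. So $c^2=\mathrm{id}$ by itself only yields $\varepsilon(ia)=1-\varepsilon(a)$, which is no contradiction. The argument therefore rests entirely on the normalization $h_{na}=h_a^n$ with $\pi(h_a^n)=y_{na}$, and you never establish it.

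Your fallback on the $\mathbb{F}_p$-span of $1$ has the same hole. The multiplication table alone does not exclude $h_1^2=1$: in that case $[h_1]*[h_1]=[[1],[h_1\varphi(h_1)]]$ is perfectly consistent with $y_1*y_1=[e,y_2]$. So none of the following comes from ``checking directly'': $\varphi(h_1)=h_1^{-1}$, the induction $\pi(h_1^n)=y_n$ (which needs $\varphi(h_1)=h_1^{-1}$ to resolve the ambiguity in $[y_{n+1},y_{n-1}]$), or $c(h_1)\in\langle h_1\rangle$.

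\textbf{The missing ingredient is a counting argument.}
\begin{enumerate}
\item $\pi^{-1}(e)=\{1\}$.
\item $\varphi$ has no fixed point over $y_a$ with $a\neq0$. If $\varphi(u)=u$, then $\pi(u)*\pi(u)=[[u^2],[u^2]]$ is a repeated value, whereas $y_a*y_a=[y_{2a},e]$ with $y_{2a}\neq e$.
\item Hence $P=\pi^{-1}(\{y_n: n\in\mathbb{F}_p\})$ is multiplicatively closed (by Lemma \ref{prop::projection}) and has exactly $1+2\cdot\frac{p-1}{2}=p$ elements. So $P$ is a subgroup isomorphic to $\mathbb{Z}/p$.
\item $\varphi|_P$ is a nontrivial involutive automorphism of $P$, so it is inversion.
\item Since $i\in\mathbb{F}_p$, your Step 2 gives $c(P)=P$, and $c^2=\mathrm{id}$.
\item $\mathrm{Aut}(\mathbb{Z}/p)$ is cyclic, so it has a unique involution. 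Hence $c|_P\in\{\mathrm{id},\varphi|_P\}$, and therefore $\pi(c(u))=\pi(u)$ for all $u\in P$.
\item Taking $\pi(u)=y_1$ gives $y_i=y_1$, i.e. $i=\pm1$. Then $-1=i^2=1$, which is impossible since the characteristic is not $2$.
\end{enumerate}
This closes the argument without needing $\pi(h_1^n)=y_n$ or the sign function $\varepsilon$ at all.

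\textbf{Comparison with the paper.} Once this step is inserted, your route is a valid proof that genuinely differs from the paper's. The paper first builds the global identification $G\cong F\times(F^\times/\iota)$ via the factorization $G=G_XG_Z$ (Lemmas \ref{lemma:elements_representation} and \ref{lemma:big_cosetity}). It then shows by element orders that the $2p$-element subgroup $\mathbb{F}_p\times\{[1],[i]\}$ is cyclic, hence abelian, which contradicts $(1,[1])\cdot(0,[i])\neq(0,[i])\cdot(1,[1])$. Your version stays local: one $\varphi$-fixed involution acting on one group of order $p$, with no factorization lemma needed. Both arguments hinge on $i\in\mathbb{F}_p$.
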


\begin{theorem} \label{theorem:bicoset}
    Let $F$ be an arbitrary field of characteristic $\not= 2$. Then the $2$-valued group $\mathcal{X}_F = (F / \iota) \times (F^\times/ \iota)$ is double coset.
\end{theorem}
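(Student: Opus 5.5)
The plan is to realize $\mathcal{X}_F$ explicitly as $H\backslash G/H$, with $G$ the affine group of the line over $F$ and $H$ the order-two subgroup generated by the reflection $x\mapsto -x$.

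Let $G = F \rtimes F^\times$ be the group of affine maps $x \mapsto a + bx$ with $a\in F$, $b\in F^\times$. We write its elements as pairs $(a,b)$. Composition gives the multiplication
$$(a,b)(c,d) = (a+bc,\, bd),$$
with unit $(0,1)$. Let $H = \{(0,1),(0,-1)\}$. Since $\charac F \neq 2$, we have $-1\neq 1$, so $H$ is a subgroup of order $2$. The double coset construction with a subgroup of order $2$ is a standard $2$-valued group, so $H\backslash G/H$ is a $2$-valued group. It remains to identify it with $\mathcal{X}_F$.

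First we compute the double cosets. Left multiplication by $(0,-1)$ gives $(0,-1)(a,b) = (-a,-b)$. Right multiplication gives $(a,b)(0,-1) = (a,-b)$. Doing both gives $(-a,b)$. Hence
$$H(a,b)H = \{(\pm a, \pm b)\}.$$
This set depends only on $[a]\in F/\iota$ and $[b]\in F^\times/\iota$, and it determines them. So the map
$$\Phi\colon \mathcal{X}_F \to H\backslash G/H, \qquad ([a],[b]) \mapsto H(a,b)H,$$
is a well-defined bijection. Its inverse reads off the classes of the coordinates, which is independent of the chosen representative.

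Next we check that $\Phi$ respects the multiplications. By definition,
$$H g_1 H * H g_2 H = \bigl[\,H g_1 g_2 H,\; H g_1 (0,-1) g_2 H\,\bigr].$$
Take $g_1=(a,b)$ and $g_2=(c,d)$. Then $g_1g_2 = (a+bc, bd)$. Also
$$g_1(0,-1)g_2 = (a,-b)(c,d) = (a-bc, -bd).$$
Applying $\Phi^{-1}$ gives the multiset $\bigl[([a+bc],[bd]),\,([a-bc],[bd])\bigr]$, since $[-bd]=[bd]$. This is exactly the product in Theorem \ref{really_2val_gr}.

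The unit and inverse are carried over by the isomorphism automatically, since they are determined by the multiplication. There is no real obstacle; the only points needing care are that $H$ has order exactly $2$ (this is where $\charac F\neq 2$ is used) and that $\Phi$ is well defined on classes. Hence $\mathcal{X}_F \cong H\backslash G/H$ is a double coset group.
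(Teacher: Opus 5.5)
Your proof is correct and follows the paper's argument essentially verbatim. It uses the same affine group $G$ with $(a,b)(c,d)=(a+bc,bd)$, the same subgroup $H=\langle y\mapsto -y\rangle$, and the same computations $H(a,b)H=\{(\pm a,\pm b)\}$ and $(a,b)\varphi(c,d)=(a-bc,-bd)$; your explicit remark that $\charac F\neq 2$ is what guarantees $|H|=2$ is a detail the paper leaves implicit.
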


\begin{remark}
    If the field $F$ is a topological field, then the corresponding topological $2$-valued group $\mathcal{X}_F$ is obtained via the double coset construction, again from a topological group.
\end{remark}

\subsection{Acknowledgements}\
The author is deeply grateful to his scientific advisor A. A. Gaifullin for his continued support,
inspiring discussions, and invaluable advice, which greatly improved the present manuscript. The author also sincerely thanks M. T. Urmanov, whose valuable comments and ideas improved this work.

\section{Two-valued group $\mathcal{X}_\C$ and two-valued dynamics} \label{section:2-val}

In this section we prove theorems \ref{really_2val_gr} and \ref{theorem:integrable_by_X}.

\begin{proof}[Proof of theorem \ref{really_2val_gr}.] 

    Let $M$ be an $R$-module, let $H$ be a subgroup of $R^\times$.
    Let us verify that the set $(M/\iota) \times (H/\iota)$ with the operation 
    $$([a], [b]) * ([c], [d]) = \bigl[([a+bc], [bd]), ([a-bc], [bd])\bigr]$$
    satisfies the multivalued group axioms.
    
    $1)\; ([0], [1]) * ([c], [d]) = [([c], [d]), ([c], [d])] = ([c], [d]) * ([0], [1])$
    
    $2) \;([a], [b]) * \left( \left[ \frac{a}{b} \right], \left[ \frac{1}{b} \right] \right) = [([a+a], [1]), ([0], [1])]$
    
    $\left(\left[\frac{a}{b}\right], \left[\frac{1}{b}\right]\right) * ([a], [b]) = \left[\left(\left[\frac{a}{b} + \frac{a}{b}\right], [1] \right), ([0], [1])\right]$

    $3) \; ([a], [b]) * (([c], [d]) * ([f], [g])) = ([a \pm b(c \pm df)], [bdg]) = (([a], [b]) * ([c], [d])) * ([f], [g])$
\end{proof}

\begin{proof}[Proof of theorem \ref{theorem:integrable_by_X}.]  

    Consider the map $\nu:A \times \C \to \sym^2(\C)$ such that 
    $$\nu(([g], [h]), z) = [(g\sqrt{z} \pm h)^2]$$
    Then
    \begin{align*}
    & 1) \;\nu(e, z) = \nu(([1], [0]), z) = [(\sqrt{z} \pm 0)^2] = [z, z] \\
    & 2) \;\nu\Big(([g], [h]), \nu\big(([j], [k]), z)\big)\Big) = \nu\Big(([g], [h]), (j\sqrt{z} \pm k)^2\Big) = (g(j\sqrt{z} \pm k) \pm h)^2 = \\
    & = (gj\sqrt{z} \pm gk \pm h)^2 = \nu\Big( [([gj], [gk+h]), ([gj], [gk-h])], z \Big) = \nu\Big(([g], [h])*([j], [k]), z)\Big)
    \end{align*}
\end{proof}

\section{Double cosetness} \label{section:bicoset}

In this section we prove theorem \ref{theorem:bicoset}.

\begin{proof}
    Let $F$ be an arbitrary field. Consider the two-valued group
    $$\mathcal{X}_F = (F / \iota) \times (F^\times/ \iota)$$ 
    with the operation
    $$([a], [b]) * ([c], [d]) = \bigl[([a+bc], [bd]), ([a-bc], [bd])\bigr].$$

    Denote the group of affine transformations of the line over the field $F$ by $G$; denote by $\varphi$ the affine transformation $y \mapsto -y$. Denote by $H$ the subgroup $\langle \varphi \rangle \subset G$. Let us show that
    $$(F / \iota) \times (F^\times/ \iota) \simeq H \backslash G / H$$

    We denote by $(a, b)$ the affine transformation $y \mapsto a + by$, where $a \in F$, $b \in F^\times$. Then multiplication in $G$ is given by the formula:
    $$(a, b)(c, d) = (a+bc, bd),$$
    thus multiplication in the double coset group $H \backslash G / H$ is given by the formula:
    $$H(a, b)H*H(c, d)H = [H(a, b)(c, d)H, H(a, b) \varphi(c, d)H] = [H(a+bc, bd)H, H(a-bc, -bd)H].$$

    The class $H(a,b)H$ of an arbitrary affine transformation $(a,b)$ in the double coset group $H \backslash G / H$ consists exactly of the affine transformations $y \mapsto \pm a \pm by$, i.e. $(\pm a, \pm b)$. Therefore, multiplication in $H \backslash G / H$ can be written as
    $$(\pm a, \pm b)*(\pm c, \pm d) = \bigl[(\pm(a+bc), \pm bd), (\pm(a-bc), \pm bd)\bigr]$$

    Thus, the map $(a, b) \mapsto H(a, b)H$ defines an isomorphism of the $2$-valued groups $(F / \iota) \times (F^\times/ \iota)$ and $H \backslash G / H$.
\end{proof}

\section{Non-cosetness} \label{section:non-coset}

We prove several general facts about the coset 2-valued groups $\mathcal{X}_F$, where $\charac F \not= 2$, that we later use to prove Theorems \ref{theorem:non_topol_coset} and \ref{theorem:non_coset_4k+1} in this section.

Suppose that the $2$-valued group $\mathcal{X}_F = (F/\iota) \times (F^\times /\iota)$ is coset, i.e. there exists a group $G$ and an involutive automorphism $\varphi \curvearrowright G$ such that $\mathcal{X}_F$ is isomorphic to the coset group $G/\langle \varphi \rangle$.

    Denote the $2$-valued subgroups $(F/\iota) \times \{[1]\}$ and $\{[0]\} \times (F^\times/\iota)$ of the $2$-valued group $\mathcal{X}_F$ by $X$ and $Z$, respectively. Denote by $\pi_G$ the projection $G \to G/\langle \varphi \rangle$ such that $g \mapsto [g] = \{g, \varphi(g)\}$ (the elements $g$ and $\varphi(g)$ may coincide). Denote the composition of the map $\pi_G$ and the isomorphism $iso: G / \langle \varphi \rangle \to Y$ by $\pi$.

    \[
    \begin{tikzcd}
      G \arrow[d, "\pi_G"] \arrow[dr, "\pi"] &  \\
      G / \langle \varphi \rangle \arrow[r, "iso"]   & \mathcal{X}_F
    \end{tikzcd}
    \]

    Denote by $G_X, G_Z \subset G$ the preimages of $2$-valued subgroups $X$ and $Z$ under the map $\pi$, respectively. Since $X$ and $Z$ are two-valued subgroups of $\mathcal{X}_F$, it follows that the groups $G_X$ and $G_Z$ are ordinary subgroups of $G$.

    We study the structure of the group $G$ and its subgroups $G_X$ and $G_Z$, as well as how the automorphism $\varphi$ acts on them.

    Take arbitrary $g_1, g_2 \in G$.
    By the definition of a coset multivalued group:
    \begin{align*}
    &\pi(g_1)*\pi(g_2) = \bigl[\pi(g_1 g_2), \pi(g_1 \varphi(g_2))\bigr] = \bigl[\{g_1g_2, \varphi(g_1g_2)\}, \{g_1\varphi(g_2), \varphi(g_1)g_2\}\bigr]
        \tag{CG}\label{eq:coset-rule}
    \end{align*}
    Thus the following lemma holds.
    \begin{lemma} \label{prop::projection}
        $\forall g_1, g_2 \in G \;\; \pi(g_1g_2) \in \pi(g_1)*\pi(g_2)$.    
    \end{lemma}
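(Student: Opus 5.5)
This lemma is essentially a restatement of the defining rule of the coset construction, transported along the isomorphism $iso$. The plan is to unwind the definitions and observe that $\pi(g_1g_2)$ is literally one of the two entries of the multiset on the right of \eqref{eq:coset-rule}.

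First I will recall how multiplication in the coset group $G/\langle\varphi\rangle$ is defined. Here the acting group is $\langle\varphi\rangle=\{\mathrm{id},\varphi\}$, of order $2$ because $\varphi$ is an involution. For classes $[g_1]=\pi_G(g_1)$ and $[g_2]=\pi_G(g_2)$ the product is therefore
$$[g_1]*[g_2]=\bigl[[g_1\,\mathrm{id}(g_2)],\,[g_1\varphi(g_2)]\bigr]=\bigl[\pi_G(g_1g_2),\,\pi_G(g_1\varphi(g_2))\bigr].$$
In particular $\pi_G(g_1g_2)\in\pi_G(g_1)*\pi_G(g_2)$, since it is the entry indexed by $h=\mathrm{id}$.

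Next I will transport this along $iso$. Since $iso$ is an isomorphism of $2$-valued groups, it carries the multiset $\pi_G(g_1)*\pi_G(g_2)$ elementwise onto $iso(\pi_G(g_1))*iso(\pi_G(g_2))=\pi(g_1)*\pi(g_2)$. Consequently $iso(\pi_G(g_1g_2))=\pi(g_1g_2)$ is an element of $\pi(g_1)*\pi(g_2)$, which is exactly the claim. This is the content of the first equality in \eqref{eq:coset-rule}.

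The only point needing a moment's care is well-definedness of the coset product, namely independence of the choice of representatives $g_1,g_2$ of the classes. This is part of the standing fact that the coset construction yields a $2$-valued group, so I will take it as given. Consequently there is no genuine obstacle: the lemma is the $h=\mathrm{id}$ term of the coset multiplication formula.
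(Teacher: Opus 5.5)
Your proposal is correct and follows the paper's argument: the paper likewise reads the lemma directly off the coset multiplication rule \eqref{eq:coset-rule}. There $\pi(g_1g_2)$ is the entry indexed by the identity of $\langle\varphi\rangle$, transported to $\mathcal{X}_F$ along $iso$.
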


    \begin{lemma} \label{lemma:pi(xz)}
        Let $\pi(x) = ([a], [1])$, $\pi(z) = ([0], [b])$. Then $\pi(xz) = ([a], [b])$.
    \end{lemma}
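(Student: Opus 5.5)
By Lemma \ref{prop::projection}, $\pi(xz)$ lies in the multiset $\pi(x)*\pi(z)$. So the plan is simply to compute this product in $\mathcal{X}_F$ using the multiplication rule of Theorem \ref{really_2val_gr} and observe that both of its entries coincide.

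We take $\pi(x) = ([a],[1])$ and $\pi(z) = ([0],[b])$. The operation gives
\[
([a],[1]) * ([0],[b]) = \bigl[([a + 1\cdot 0],[1\cdot b]),\ ([a - 1\cdot 0],[1\cdot b])\bigr] = \bigl[([a],[b]),\ ([a],[b])\bigr].
\]
One point needs checking: the product is well defined on classes. Replacing the representative $1$ by $-1$ or $0$ by $-0$ only changes the sign of $bc = \pm 0 = 0$. So for every choice of representatives both elements of the multiset equal $([a],[b])$.

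Hence $\pi(xz)\in\bigl[([a],[b]),([a],[b])\bigr]$, which forces $\pi(xz) = ([a],[b])$. I expect no step to be an obstacle. The only care needed is to apply Lemma \ref{prop::projection} in the correct order, with $x$ on the left and $z$ on the right. The reversed product $([0],[b])*([a],[1]) = \bigl[([ba],[b]),([-ba],[b])\bigr]$ is also single-valued but equals $([ab],[b])$ instead, so the order matters.
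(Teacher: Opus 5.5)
Your proof is correct and matches the paper's argument: apply Lemma \ref{prop::projection} to get $\pi(xz)\in\pi(x)*\pi(z)$, then observe that this product is $\bigl[([a],[b]),([a],[b])\bigr]$. Your remarks on independence of representatives and on the order of the factors are accurate, though the paper's proof does without them.
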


    \begin{proof}
        From lemma \ref{prop::projection} it follows that
        $$\pi(xz) \in \pi(x) * \pi(z) = [([a], [b]), ([a], [b])],$$
        from where the statement of the lemma follows.
    \end{proof}

    \begin{lemma} \label{prop::phi_structure}
    The authomorphism $\varphi$ has the following properties:
    
    1) $\forall x \in G_X \setminus \{e\} \;\; \varphi(x) \not= x$

    2) $\forall z \in G_Z \;\; \varphi(z) = z$
    \end{lemma}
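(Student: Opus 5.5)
The plan is to prove part 1 first, using only the coset rule \eqref{eq:coset-rule}, and then derive part 2 from part 1 by multiplying $z$ against a fixed element of $G_X$ that is not fixed by $\varphi$. Throughout, two facts are used. First, $\charac F \neq 2$. Second, the isomorphism $iso$ sends the unit $[e]=\{e\}$ of $G/\langle\varphi\rangle$ to the unit $([0],[1])$ of $\mathcal{X}_F$, so $\pi(g) = ([0],[1])$ holds iff $g = e$.

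\emph{Part 1.} Take $x \in G_X$ with $\pi(x) = ([a],[1])$, and suppose $\varphi(x) = x$. Apply \eqref{eq:coset-rule} with $g_1 = g_2 = x$. The two classes on the right become $\{x^2, \varphi(x^2)\}$ and $\{x\varphi(x), \varphi(x)x\} = \{x^2\}$. Since $\varphi(x^2) = x^2$, both entries equal $\pi(x^2)$, so $\pi(x)*\pi(x) = [\pi(x^2), \pi(x^2)]$. On the other hand, the operation in $\mathcal{X}_F$ gives
$$\pi(x)*\pi(x) = \bigl[([2a],[1]),([0],[1])\bigr].$$
Comparing the two multisets forces $[2a] = [0]$, i.e. $2a = 0$. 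Since $\charac F \neq 2$, this gives $a = 0$, so $\pi(x) = ([0],[1])$ and hence $x = e$. Therefore every $x \in G_X \setminus \{e\}$ satisfies $\varphi(x) \neq x$.

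\emph{Part 2.} Fix $x_0 \in G_X$ with $\pi(x_0) = ([1],[1])$; it exists because $\pi$ is surjective. Since $1 \neq 0$, we have $x_0 \neq e$, so part 1 gives $\varphi(x_0) \neq x_0$. Now let $z \in G_Z$ with $\pi(z) = ([0],[b])$. In $\mathcal{X}_F$,
$$\pi(x_0)*\pi(z) = \bigl[([1 \pm b\cdot 0],[b])\bigr] = \bigl[([1],[b]),([1],[b])\bigr],$$
so both entries coincide. By \eqref{eq:coset-rule}, the two classes $\{x_0z, \varphi(x_0)\varphi(z)\}$ and $\{x_0\varphi(z), \varphi(x_0)z\}$ are therefore equal. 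In particular $x_0\varphi(z) \in \{x_0 z, \varphi(x_0)\varphi(z)\}$. If $x_0\varphi(z) = \varphi(x_0)\varphi(z)$, cancelling $\varphi(z)$ gives $x_0 = \varphi(x_0)$, contradicting the choice of $x_0$. Hence $x_0\varphi(z) = x_0 z$, and cancelling $x_0$ gives $\varphi(z) = z$.

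The main point needing care is the multiset comparison in part 1. When $\varphi(x) = x$, both classes in \eqref{eq:coset-rule} collapse to the same single class. That is exactly what clashes with the two distinct values $([2a],[1])$ and $([0],[1])$ produced by $\mathcal{X}_F$, and it is the only place where $\charac F \neq 2$ enters.
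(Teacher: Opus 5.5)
Your proof is correct and follows essentially the same route as the paper. Part 1 uses the collapse of both classes in \eqref{eq:coset-rule} when $\varphi(x)=x$, set against the two entries $([2a],[1]),([0],[1])$; part 2 equates the two coinciding classes for $x_0 z$ with $\varphi(x_0)\neq x_0$ and then cancels. The only differences are cosmetic: you choose $x_0$ explicitly with $\pi(x_0)=([1],[1])$, you justify $\pi^{-1}([0],[1])=\{e\}$, and you track $x_0\varphi(z)$ rather than $xz$.
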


    \begin{proof}
    Suppose that there exists an element $x \in G_X$ such that $\varphi(x) = x$. Denote $\pi(x)$ by $([a], 1)$. Then
    $$([a\pm a], 1) = \pi(x)*\pi(x) = [[x^2], [x\varphi(x)]] = [[x^2], [x^2]]$$
    Since the characteristic of the field $F$ is not $2$, it follows that $a = 0$ and thus $x = e$. Therefore $\forall x \in G_X \setminus \{e\} \;\; \varphi(x) \not= x$.

    For all $x \in G_X$, $z \in G_Z$ from \eqref{eq:coset-rule} and lemma \ref{lemma:pi(xz)} it follows that $\{xz, \varphi(x z)\} = \{x\varphi(z), \varphi(x)z\}$. The $2$-valued subgroup $X$ contains more than one element. Hence there exists an element $x \in G_X$ such that $\varphi(x) \not= x$. Therefore $\forall z \in G_Z$ $xz \not=\varphi(x)z$; thus $xz = x \varphi(z)$. This means that $z = \varphi(z)$ for all $z \in G_Z$.
    \end{proof}

    \begin{lemma} \label{lemma:elements_representation}
    Every element of the group $G$ admits a unique representation as the product of an element of the subgroup $G_X$ and an element of the subgroup $G_Z$.
    \end{lemma}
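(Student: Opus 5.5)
We prove existence and uniqueness separately, working entirely with $\pi$, Lemma \ref{prop::projection}, Lemma \ref{lemma:pi(xz)} and Lemma \ref{prop::phi_structure}.

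\emph{Existence.} Take $g \in G$ with $\pi(g) = ([a],[b])$. Since $\pi$ is surjective, pick $x_0 \in G$ with $\pi(x_0) = ([a],[1])$; then $x_0 \in G_X$. Pick $z_0 \in G$ with $\pi(z_0) = ([0],[b])$; then $z_0 \in G_Z$. By Lemma \ref{lemma:pi(xz)}, $\pi(x_0 z_0) = ([a],[b]) = \pi(g)$, so $g \in \{x_0z_0, \varphi(x_0z_0)\}$. If $g = x_0z_0$ we are done. Otherwise $g = \varphi(x_0)\varphi(z_0) = \varphi(x_0) z_0$ by Lemma \ref{prop::phi_structure}(2). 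Here $\varphi(x_0) \in G_X$, because $\pi\circ\varphi = \pi$ and so $G_X$ is $\varphi$-invariant. In both cases $g$ is a product of an element of $G_X$ and an element of $G_Z$.

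\emph{Uniqueness.} It suffices to show $G_X \cap G_Z = \{e\}$. Indeed, if $x_1z_1 = x_2z_2$, then $x_2^{-1}x_1 = z_2z_1^{-1}$ lies in $G_X\cap G_Z$. Now let $y \in G_X \cap G_Z$. Then $\pi(y) \in X \cap Z = \{([0],[1])\}$, and $\varphi(y) = y$ by Lemma \ref{prop::phi_structure}(2). By Lemma \ref{prop::phi_structure}(1), an element of $G_X$ fixed by $\varphi$ must be $e$, so $y = e$.

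Alternatively, one can note directly that $\pi^{-1}([0],[1])$ is the set $\{y,\varphi(y)\}$ mapping to the unit. This set is $\{e\}$ because the unit of the coset group is the class $[e] = \{e\}$ and the isomorphism sends units to units. This is the subtle point: we must make sure the fibre over the unit is exactly $\{e\}$. That holds since $\pi_G$ identifies only $g$ with $\varphi(g)$, and $\varphi(e) = e$.

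Nothing here looks hard. The only step requiring care is checking that $G_X$ is $\varphi$-stable, which is used in the existence part, and this follows immediately from $\pi(\varphi(g)) = \pi(g)$.
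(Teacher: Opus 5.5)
Your proof is correct. It differs from the paper's only in how each half is closed off.

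In the existence part, the paper splits off the case $[a]=[0]$. For $[a]\neq[0]$ it argues that the two products $xz$ and $\varphi(x)z$ both lie over $([a],[b])$ and so exhaust its fibre of at most two points. That counting step relies on $x\neq\varphi(x)$ (part (1) of Lemma \ref{prop::phi_structure}) to know the two products are distinct. You instead use that the fibre of $\pi$ through $x_0z_0$ is exactly $\{x_0z_0,\varphi(x_0z_0)\}$, and rewrite $\varphi(x_0z_0)=\varphi(x_0)z_0$ using part (2) and the $\varphi$-stability of $G_X$. This needs no case split and no distinctness check.

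In the uniqueness part, the paper applies Lemma \ref{lemma:pi(xz)} to both factorizations and reads off $[b_1]=[b_2]$. It then concludes $z_1=z_2$ because $\pi$ is injective on $G_Z$, which is part (2) of Lemma \ref{prop::phi_structure}. You use the standard criterion $G_X\cap G_Z=\{e\}$. Your second variant shows this follows from nothing more than the fibre of $\pi$ over the unit being $\{e\}$, so your uniqueness half does not depend on Lemma \ref{prop::phi_structure} at all.

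The two arguments are about equally short. Yours isolates the purely group-theoretic content: a product decomposition with trivial intersection. The paper's stays in the language of $\pi$-images throughout.
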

    \begin{proof}
        1) Existance. Consider an arbitrary element $g \in G$. Denote its image under the map $\pi$ by $([a], [b])$. If $[a] = 0$, then $\pi(g) \in Z$, therefore $g \in G_Z$. Then $g$ can be represented as $e \cdot g$. If $[a] \not= 0$, then the element $([a], [1]) \in X$ has two preimages in $G_X$ under $\pi$, namely some $x$ and $\varphi(x)$, while the element $([0],[b])$ has a single preimage; denote it by $z$. It follows from lemma \ref{lemma:pi(xz)} that both elements $xz$, $\varphi(x)z$ map to $([a], [b])$ under the mapping $\pi$. Since every element of $\mathcal{X}_F$ has no more than $2$ preimages, it follows that one of the products $xz$, $\varphi(x)z$ equals $g$.

        2) Uniqueness. Let $x_1, x_2 \in X$, $z_1, z_2 \in Z$, $x_1 z_1 = x_2 z_2$. Denote by $([a_1], [1])$, $([a_2], [0])$ the images of $x_1$, $x_2$ under $\pi$, respectively. Denote by $([0], [b_1])$, $([0], [b_2])$ the images of $z_1$, $z_2$ under $\pi$, respectively.

        It follows from lemma \ref{lemma:pi(xz)} that $\pi(x_1z_1) = ([a_1], [b_1])$, $\pi(x_2z_2) = ([a_2], [b_2])$. Since $x_1z_1 = x_2z_2$, we have $[a_1] = [a_2]$, $[b_1] = [b_2]$. It follows from lemma \ref{prop::phi_structure} that the element $([0], [b_1])$ has exactly one preimage under $\pi$. Thus $z_1 = z_2$. Therefore, since $x_1z_1 = x_2z_2$, we also obtain $x_1 = x_2$.
    \end{proof}

    We now prove the following key lemma.

    \begin{lemma} \label{lemma:big_cosetity}
        Let $F$ be a field of characteristic $\not= 2$. Suppose that the $2$-valued group $\mathcal{X}_F = (F / \iota) \times (F^\times / \iota)$  with the operation
        $$([a], [b]) * ([c], [d]) = \bigl[([a+bc], [bd]), ([a-bc], [bd])\bigr]$$
        is isomorphic to a coset group $G/\langle \varphi \rangle$. Then

        1) There exists a bijection
        $$\alpha: G \longrightarrow F \times (F^\times/ \iota)$$
        under which the operation on the set $F \times (F^\times/ \iota)$ given by the formula
        $$x \cdot y = \alpha(\alpha^{-1}(x)\alpha^{-1}(y))$$
        satisfies the following condition: for any $a, c \in F$, $b, d \in F^\times$ the equality
        $$(a, [b])\cdot(c, [d]) = (\pm a \pm bc, [bd])$$
        holds for some choice of signs.

        2) If $F = \C$ and the group $G$ is topological, then there exists a homeomorphism $\alpha$ satisfying this condition.
    \end{lemma}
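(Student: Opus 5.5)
The plan is to build $\alpha$ from the decomposition $G = G_X G_Z$ of Lemma \ref{lemma:elements_representation}. For each class $[a]\in F/\iota$ with $[a]\neq[0]$, the fibre $\pi^{-1}(([a],[1]))\subset G_X$ has exactly two elements $x,\varphi(x)$, which are distinct by Lemma \ref{prop::phi_structure}. Choose a labelling $\lambda\colon G_X\to F$ that sends one of them to $a$ and the other to $-a$, and send $e$ to $0$. Then $\lambda$ is a bijection with $[\lambda(x)] = $ first coordinate of $\pi(x)$. By Lemma \ref{prop::phi_structure}, $\pi$ restricted to $G_Z$ is a bijection onto $Z\cong F^\times/\iota$; write $\pi(z)=([0],\mu(z))$. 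Define
$$\alpha(xz) = (\lambda(x), \mu(z)).$$
By Lemma \ref{lemma:elements_representation} this is a well-defined bijection $G\to F\times(F^\times/\iota)$. Moreover, by Lemma \ref{lemma:pi(xz)}, $\pi(g)=([\alpha_1(g)],\alpha_2(g))$, so $\alpha$ refines $\pi$.

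Now take $g_1,g_2$ with $\alpha(g_1)=(a,[b])$ and $\alpha(g_2)=(c,[d])$. Then $\pi(g_1)=([a],[b])$ and $\pi(g_2)=([c],[d])$. By Lemma \ref{prop::projection},
$$\pi(g_1g_2)\in \bigl[([a+bc],[bd]),([a-bc],[bd])\bigr].$$
Write $\alpha(g_1g_2)=(u,[v])$. Since $\pi(g_1g_2)=([u],[v])$, we get $[v]=[bd]$ and $u\in\{\pm(a+bc),\pm(a-bc)\}$. Every such value has the form $\pm a\pm bc$, which proves part 1).

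For part 2), let $F=\C$ and $G$ be a topological group. We must arrange that the labelling $\lambda$ is a homeomorphism $G_X\to\C$, that $\mu$ is a homeomorphism, and that the multiplication map $G_X\times G_Z\to G$ is a homeomorphism. The topological coset assumption says $\pi_G$ is the quotient map and $iso$ is a homeomorphism. The map $\pi|_{G_Z}$ is a continuous bijection onto $Z\cong\C^\times/\iota$. Its inverse is continuous because $\varphi$ fixes $G_Z$ pointwise, so $G_Z$ is the fixed-point set, and the quotient map restricted to this saturated closed set is a homeomorphism onto its image.

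For $G_X$, the map $\pi|_{G_X}\colon G_X\to X\cong\C/\iota$ is a quotient map that is $2$-to-$1$ away from $e$. Composing it with the homeomorphism $\C/\iota\to\C$, $[a]\mapsto a^2$, gives a continuous map $q\colon G_X\to\C$ with $q^{-1}(0)=\{e\}$. Off $e$, $q$ is a $2$-sheeted covering of $\C^\times$. We need a continuous square root $\lambda$ with $\lambda^2=q$ and $\lambda\circ\varphi=-\lambda$. This exists precisely when $G_X\setminus\{e\}$ is the connected double cover of $\C^\times$, i.e. when $G_X\setminus\{e\}$ is connected; that covering is then isomorphic to $w\mapsto w^2$ on $\C^\times$ by the classification of coverings. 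If instead the cover were trivial (disconnected), continuity at $e$ should fail for a group structure. I expect this to be the main obstacle. I would try to show that $G_X$ is connected near $e$, using the fact that $g\mapsto g\varphi(g)^{-1}$ maps into $G_X$ and that the path $t\mapsto \pi^{-1}$-lifts of $([ta],[1])$ must pass through $e$ from both sheets. Failing that, I would choose the labelling differently, using the group law $x\cdot x$ with $\pi(x^2)\in\{([2a],[1]),([0],[1])\}$ to pin down the sheets consistently.

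Finally, multiplication $G_X\times G_Z\to G$ is a continuous bijection. Its inverse $g\mapsto(x,z)$ is continuous because $z=(\pi|_{G_Z})^{-1}(([0],\pi_2(g)))$ depends continuously on $g$ through $\pi$, and then $x=gz^{-1}$. Hence $\alpha=(\lambda\times\mu)\circ(\text{mult})^{-1}$ is a homeomorphism.
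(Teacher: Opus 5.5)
Your part 1) is essentially the paper's construction (labels $\pm a$ on the fibre $\{x,\varphi(x)\}$, $\pi|_{G_Z}$ on $G_Z$, extension through the unique factorization $g=xz$), and it is correct. In part 2), your fixed-point-set argument for the continuity of $g\mapsto(x,z)$ is sound, and it makes explicit a point the paper leaves implicit.

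The genuine gap is the step you flag yourself: you never prove that the double cover $G_X\setminus\{e\}\to(\C\setminus\{0\})/\iota$ is nontrivial, i.e.\ connected. A cleverer labelling cannot avoid this. If the cover were trivial, then on a sheet $S$ the map $q|_S$ is a homeomorphism onto $\C^\times$. Any continuous $\lambda$ with $\lambda^2=q$ would then give a continuous square root of the identity on $\C^\times$, which does not exist. Your heuristics do not settle it either. Lifts of $([ta],[1])$ reach $e$ from both sheets in the trivial case as well; this shows only that $G_X$ is connected, not $G_X\setminus\{e\}$. And the topology near $e$ is harmless on its own: in the trivial case $G_X$ would just be two planes glued at $e$, with $q$ continuous. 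The contradiction has to come from the multiplication.

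The missing idea, as in the paper, is to feed a closed lift into the coset rule \eqref{eq:coset-rule}.
\begin{itemize}
\item If the cover were trivial, the loop $t\mapsto[e^{i\pi t}]$, $t\in[0,1]$, would lift to a closed loop $\gamma$ in one sheet.
\item Pick $x_0\in G_X$ with $\pi(x_0)=([2],[1])$. By Lemma~\ref{prop::projection}, $\pi(x_0\gamma(t))$ lies in $\{([2+e^{i\pi t}],[1]),([2-e^{i\pi t}],[1])\}$.
\item These two continuous curves never meet, since $2+w=\pm(2-w)$ forces $w=0$.
\item By continuity of multiplication and connectedness of $[0,1]$, the same branch is taken for all $t$. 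So $\pi(x_0\gamma(0))$ and $\pi(x_0\gamma(1))$ are $([3],[1])$ and $([1],[1])$ in some order, contradicting $\gamma(0)=\gamma(1)$.
\end{itemize}

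Your intuition that a group structure should forbid the trivial cover can also be made rigorous via homogeneity. A point of a sheet has an open neighbourhood homeomorphic to a disc, so by left translation $e$ would too. But removing $e$ from any neighbourhood of $e$ leaves a set meeting both disjoint open sheets, hence disconnected, whereas a disc minus a point is connected.

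Once nontriviality is established, the classification of coverings gives your $\lambda$ with $\lambda\circ\varphi=-\lambda$, and the rest of your argument goes through. Continuity of $\lambda^{-1}$ at $0$ still needs a word; it follows, for instance, from properness of $\pi_G$ as in the paper.
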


    \begin{proof}[Proof of statement (1) of lemma \ref{lemma:big_cosetity}.]

        The projection $\pi$ defines a bijection $\alpha_Z$ between $G_Z$ and $Z = F^\times/ \iota$. Since every nonzero element of $X$ has exactly two preimages under the projection $\pi$, and $iso: G / \langle \varphi \rangle \to (F/\iota) \times (F^\times / \iota)$ defines a bijection between $G_X/ \langle \varphi \rangle$ and $X = F/ \iota$, we obtain that there also exists a bijection $\alpha_X$ between $G_X$ and $F$ such that if $\pi(x) = ([a], [1])$, then the pair of elements $x$, $\varphi(x) \in G_X$ maps under $\alpha_X$ to the pair of elements $\pm a$.

        We now extend $\alpha_X$ and $\alpha_Z$ to the whole group $G$. From lemma \ref{lemma:elements_representation} it follows that for every $g \in G$ there exist unique elements $x \in X$, $z \in Z$ such that $xz = g$. Set $\alpha(g) = (\alpha_X(x), \alpha_Z(z))$. This rule defines a bijection.

        We now prove that this bijection satisfies the condition of the lemma. Let $g_1, g_2$ be arbitrary elements of the group $G$; let $g_1 = x_1z_1$, $g_2 = x_2z_2$ where $x_1, x_2 \in G_X$, $z_1, z_2 \in G_Z$. Denote by $([a], [1])$, $([0], [b])$, $([c], [1])$, $([0], [d])$ the images of $x_1, z_1, x_2, z_2$ under $\pi$, respectively. It follows from lemma \ref{lemma:pi(xz)} that $\pi(g_1) = ([a], [b])$, $\pi(g_2) = ([c], [d])$. By \eqref{eq:coset-rule}, $\pi(g_1g_2) \in \{([a\pm bc], [bd])\}$. Therefore $\alpha(g_1g_2) = (\pm a \pm bc, [bd])$ for some choice of signs as required.
    \end{proof}

    To prove the second part of lemma \ref{lemma:big_cosetity} we need the following standard lemma; see, e.g., \cite[I.3, th. 3.1]{Bredon}.
    \begin{lemma} \label{lemma:factor_is_proper}
        The quotient map of a Hausdorff space by an involutive homeomorphism is proper.
    \end{lemma}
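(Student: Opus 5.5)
We must prove that if $X$ is a Hausdorff space and $\sigma\colon X\to X$ is a homeomorphism with $\sigma^2=\mathrm{id}$, then the quotient map $p\colon X\to X/\langle\sigma\rangle$ is proper. Here proper means closed with compact fibres, equivalently that preimages of compact sets are compact. The plan is to treat $\langle\sigma\rangle=\{\mathrm{id},\sigma\}$ as a finite group acting by homeomorphisms and to use three facts in turn: $p$ is closed, the fibres of $p$ are finite, and the quotient is Hausdorff.

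\emph{Step 1: $p$ is open and closed.} For any set $U\subset X$ we have $p^{-1}(p(U))=U\cup\sigma(U)$. Since $\sigma$ is a homeomorphism, this set is open (resp.\ closed) whenever $U$ is open (resp.\ closed). By the definition of the quotient topology, $p(U)$ is then open (resp.\ closed). So $p$ is both open and closed.

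\emph{Step 2: fibres are compact.} Each fibre $p^{-1}(p(x))=\{x,\sigma(x)\}$ has at most two points, so it is compact. A closed continuous map with compact fibres is proper in Bourbaki's sense. Hence for every compact $K\subset X/\langle\sigma\rangle$ the preimage $p^{-1}(K)$ is compact.

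I would not cite this last fact blindly but prove it directly by the tube argument. Take an open cover $\{V_i\}$ of $p^{-1}(K)$. For each $y\in K$ the fibre is finite, so finitely many $V_i$ cover it; call their union $W_y$. Then $O_y=(X/\langle\sigma\rangle)\setminus p(X\setminus W_y)$ is open because $p$ is closed, it contains $y$, and it satisfies $p^{-1}(O_y)\subset W_y$. Finitely many of the sets $O_y$ cover $K$, and the corresponding finitely many $V_i$ then cover $p^{-1}(K)$.

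\emph{Step 3: the quotient is Hausdorff.} This is where the Hausdorff hypothesis is used. Let $p(x)\neq p(y)$, so $y\notin\{x,\sigma(x)\}$. By the Hausdorff property choose an open $U\ni x$ and an open $V\ni y$ with $U\cap V=\emptyset$ and $\sigma(U)\cap V=\emptyset$; this is possible because $y$ can be separated from both $x$ and $\sigma(x)$, and then $U$ is shrunk using continuity of $\sigma$. Replace $V$ by $V'=V\cap\sigma(V)$, adjusting by symmetry so that $y$ still lies in it. Then $p(U)$ and $p(V')$ are disjoint open neighbourhoods, using Step 1. This step matters only if the paper's notion of properness requires a Hausdorff target, for instance so that images of compact sets are closed when the lemma is later applied to $G\to\mathcal{X}_\C$.

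The main obstacle is step 3, specifically arranging the neighbourhoods to be $\sigma$-invariant. I would first try $U'=U\cap\sigma(U)$ when $\sigma(x)\neq x$; if $\sigma(x)=x$ the original $U$ is already usable after intersecting with $\sigma(U)$.
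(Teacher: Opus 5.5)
Your Steps 1 and 2 are already a complete and correct proof of the lemma in the sense the paper uses later, namely that preimages of compact sets are compact. Your route differs from the paper's only in being self-contained. The paper gives no argument of its own: it cites Bredon's theorem on quotients by compact transformation groups (I.3, Thm.~3.1). That result covers an arbitrary compact group, so closedness of the quotient map needs a compactness argument there, and it also yields Hausdorffness of the quotient.

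You specialise to the group $\{\mathrm{id},\sigma\}$. There the saturation $U\cup\sigma(U)$ of a closed set is a finite union of closed sets, so closedness of $p$ is immediate. Your tube argument then correctly turns "closed with finite fibres" into "preimages of compacta are compact". What this buys is an elementary proof that never uses the Hausdorff hypothesis. That hypothesis matters only if "compact" is meant to include Hausdorff, and in that case $p^{-1}(K)$ inherits it from $X$. What the citation buys is the extra conclusion that the quotient is Hausdorff, which the paper's application does not actually need.

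Step 3 is unnecessary, as you suspected, but the proposed repair is wrong as written.

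\begin{itemize}
\item The set $V\cap\sigma(V)$ contains $y$ only if $\sigma(y)\in V$.
\item Likewise, $U\cap\sigma(U)$ typically misses $x$ when $\sigma(x)\neq x$ and $U$ has been shrunk to separate points.
\end{itemize}

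No invariance is needed. Since $p$ is open and $V\cap\bigl(U\cup\sigma(U)\bigr)=\emptyset$, the sets $p(U)$ and $p(V)$ are already disjoint open neighbourhoods of $p(x)$ and $p(y)$. If you want invariant sets, take unions rather than intersections: $U\cup\sigma(U)$ and $V\cup\sigma(V)$ are disjoint, because $\sigma(U)\cap\sigma(V)=\sigma(U\cap V)$ and $U\cap\sigma(V)=\sigma\bigl(\sigma(U)\cap V\bigr)$.
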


    We now prove the second part of lemma \ref{lemma:big_cosetity}.
    
    \begin{proof}[Proof of statement (2) of lemma \ref{lemma:big_cosetity}.]

        Since the projection $\pi: G \to (\C / \iota) \times (\C^\times / \iota)$ is continuous in this case, it follows that the resulting bijection $\alpha_Z$ is also continuous. Therefore, for the bijection $\a$ to be continuous in the case $F = \C$, it is necessary and sufficient that the bijection $\alpha_X$ described in statement (1) be continuous.

        The projection $\pi$ defines a continuous two-sheeted covering $p: G_X \setminus \{e\} \to (\C \setminus \{0\}) / \iota$. Let us show that this covering is nontrivial. Suppose the contrary. Then $$G_X \setminus \{e\} = (\C \setminus \{0\}) / \iota \sqcup (\C \setminus \{0\}) / \iota.$$ We denote the class $[z]$ from one of these copies by $[z]_1$, and the class of the same number $z$ from the other copy by $[z]_2$, having $\pi([z]_1) = \pi([z]_2) = [z]$. It follows from equality \eqref{eq:coset-rule} that
        $$[z]_1 \cdot [z']_1, [z]_1 \cdot [z']_2 \mapsto [z \pm z'].$$ 
        Fix $z = 2$ and $j \in \{1, 2\}$ such that $[2]_1 \cdot [1]_j \xmapsto{p} [3]$. Continuity of the operation implies that for any class $[z']_j$ we have $[2]_1 \cdot [z']_j \mapsto [2 + z']$.

        Consider the path $z' = e^{i \pi t}$, $t \in [0,1]$. Since $[e^0]_j = [e^{i\pi}]_j$, continuity of the operation forces $[3] = [2+e^0] = [2 + e^{i \pi}] = [1]$, which is false. Hence, the projection $\pi$ defines a nontrivial continuous two-sheeted covering $G_X \setminus \{e\} \to (\C/\iota) \setminus \{0\}$.

        It can be lifted to a continuous map $G_X \setminus\{e\} \to \C \setminus \{0\}$. Its extension by zero at the point $e$ is a continuous bijection $\alpha_X$, which yields a continuous bijection $\alpha: G \to \C \times (\C^\times / \iota)$. This bijection, together with the natural projection $g: \C \times (\C^\times / \iota) \to (\C / \iota) \times (\C^\times / \iota)$, completes the following commutative diagram.

        \[
        \begin{tikzcd}
          G \arrow[r, "\alpha"] \arrow[d, "\pi_G"] \arrow[dr, "\pi"] & \C \times (\C^\times / \iota) \arrow[d, "g"] \\
          G/H \arrow[r, "iso"]                              & (\C / \iota) \times (\C^\times / \iota)
        \end{tikzcd}
        \]

        Let us prove that the resulting map $\alpha$ is a homeomorphism.

        The map $\pi_G$ is proper by lemma \ref{lemma:factor_is_proper}. Since $iso$ is an isomorphism, it follows that $\pi$ is also proper. Let $K \subset \C \times (\C^\times / \iota)$ be a compact set. Then, since $g$ is continuous and $\pi_G$ is proper, it follows that $\pi^{-1}(g(K))$ is also a compact set. Continuity of $\alpha$ implies that $\alpha^{-1}(K)$ is a closed subset of the compact set $\pi^{-1}(g(K))$, hence it is compact itself. Therefore, $\alpha$ is a continuous proper bijection from a Hausdorff space onto a locally compact space, which implies that it is a homeomorphism.
    \end{proof}

We now use the lemmas to prove theorems \ref{theorem:non_coset_4k+1} and \ref{theorem:non_topol_coset}.

\begin{proof}[Proof of theorem \ref{theorem:non_coset_4k+1}.]
    Let $F$ be a field of characteristic $4k+1$. Suppose that the $2$-valued group
    $$\mathcal{X}_F = (F / \iota) \times (F^\times/ \iota)$$
    with the operation
    $$([a], [b]) * ([c], [d]) = \bigl[([a+bc], [bd]), ([a-bc], [bd])\bigr]$$
    is coset.

    Denote by $\mathbb{F}_{4k+1}$ the subfield of $F$ consisting of $4k+1$ elements.

    The group $F^\times/ \iota$ has an even number of elements, thus it has an element of order $2$. Denote it by $[s]$. Identify the elements of $G$ with the elements of $F \times (F^\times / \iota)$ via the bijection from lemma \ref{lemma:big_cosetity}.
    Consider the subgroup $A = \mathbb{F}_{4k+1} \times \{[1], [s]\} \subset G$. This subgroup consists of $2\cdot(4k+1)$ elements. Consider the element $(1, [s]) \in G$.

    It follows from lemma \ref{lemma:big_cosetity} that $(1, [s])\cdot(1, [s]) = (\pm 1\pm s, 1)$ which is not equal to $(0, 1)$ for any choice of signs. Hence, the order of the element $(1, [s])$ is not $2$. Moreover, $(1, [s])^{4k+1} = (\dots, [s]) \not= (0, 1)$. Therefore, the order of the element $(1, [s])$ is not $4k+1$ also. Thus, it equals $2 \cdot (4k+1)$. It follows that the subgroup $A \subset G$ is cyclic and hence commutative.

    Therefore
    $$(1, [1])\cdot(0, [s]) = (0, [s])\cdot(1, [1])$$
    On other hand, it follows from lemma \ref{lemma:big_cosetity} that
    $$(1, [1]) \cdot (0, [s]) = (\pm 1, [s]),$$
    $$(0, [s])\cdot (1, [1]) = (\pm s, [s]),$$
    a contradiction.
\end{proof}

\begin{proof}[Proof of theorem \ref{theorem:non_topol_coset}.]
    It follows from the second statement of lemma \ref{lemma:big_cosetity} that if the 2-valued group $\mathcal{X}_\mathbb{C}$ is obtained via the coset construction from a topological group $G$, then there exists a homeomorphism $\alpha: G \to \C \times (\C^\times/ \iota)$, under which the operation of the group $G$ is sent to an operation on the set $\C \times (\C^\times/ \iota)$ such that for all $a, c \in \C$, $b, d \in \C^\times$ the following equality holds for some choice of signs: 
    $$(a, [b])\cdot(c, [d]) = (\pm a \pm bc, [bd])$$
    Since $\alpha$ is a homeomorphism, $G$ is a topological group and $z_1 \cdot z_2 = \alpha^{-1}(\alpha(z_1)\cdot \alpha(z_2))$, it follows that this operation on the set $\C \times (\C^\times/ \iota)$ is continuous in both arguments.
    Therefore the set $\C \times (\C^\times/ \iota)$ carries the continuous operation with the following property:
    $$(a, [b]) \cdot (c, [d]) \in \{(\pm a \pm bc, [bd])\}$$
    Consider a continuous path on $\C \times (\C^\times/ \iota)$:
    $$I(t) = (0, [e^{i \pi t}])$$
    Continuity of multiplication implies that either for all $t \in [0,1]$ we have
    $$I(t) \cdot (1, [1]) = (e^{i \pi t}, [1])$$
    or for all $t \in [0,1]$ we have
    $$I(t) \cdot (1, [1]) = (-e^{i \pi t}, [1])$$

    It follows that the first element $e^{i \pi t}$ of the pair $I(t) \cdot (1, [1]) \in \C \times (\C^\times/ \iota)$ changes sign as $t$ runs through the interval $[0, 1]$. But $I(0) = I(1)$ because $[e^{i \pi t}] = \{e^{i \pi t}, -e^{i \pi t}\}$. A contradiction.
\end{proof}

\end{document}